\newtheorem{theorem}{Theorem}[section]
\newtheorem{lemma}[theorem]{Lemma}
\theoremstyle{definition}
\numberwithin{equation}{section}
\begin{document}
\Large
\title[Maps preserving the fixed points of products of
operators]{Maps preserving the fixed points of products of operators}

\author[Ali Taghavi, Roja Hosseinzadeh and Vahid Darvish]{Ali Taghavi, Roja Hosseinzadeh and Vahid Darvish}

\address{{ Department of Mathematics, Faculty of Mathematical Sciences,
 University of Mazandaran, P. O. Box 47416-1468, Babolsar, Iran.}}

\email{taghavi@umz.ac.ir,  ro.hosseinzadeh@umz.ac.ir, v.darvish@stu.umz.ac.ir}

\subjclass[2000]{46J10, 47B48}

\keywords{ Preserver problem, Operator algebra, Fixed point}

\begin{abstract}\large
Let $X$ be a complex Banach space with $\dim X\geq3$ and $B(X)$
the algebra of all bounded linear operators on $X$. Suppose
$\phi:B(X)\longrightarrow B(X)$ is a surjective map satisfying
the following property:
$$Fix(AB)=Fix(\phi(A)\phi(B)), \ \ \ (A, B\in B(X)).$$
Then the form of $\phi$ is characterized, where $Fix(T)$ is the
set of all fixed points of an operator $T$.
\end{abstract} \maketitle

\section{Introduction}
\noindent The study of maps on operator algebras
preserving certain properties is a topic which
attracts much attention of many authors. Some of these problems are concerned with preserving a certain property of usual products or other products of operators (see \cite{2}-\cite{5}, \cite{7}, \cite{9}-\cite{11} and \cite{13}-\cite{14}.)

Let $B(X)$ denotes the algebra of all bounded linear operators on a complex Banach space $X$ with $\dim X\geq3$. Recall that $x \in X$ is a fixed point of an operator $T \in B(X)$, whenever we have $Tx=x$. Denote by $Fix(T)$,
the set of all fixed points of $T$. In this paper, we characterize the concrete forms of surjective maps on $B(X)$ such that preserve the fixed points of products of operators.
 The statements of our main results are the follows.

 %---------------------------------------------------------------------------------------%
\textbf{Main Theorem.} Let $X$ be a complex Banach space with $\dim
X\geq3$. Suppose $\phi:B(X)\longrightarrow B(X)$ is a surjective map with
the property that
\begin{equation}\label{*}
Fix(AB)=Fix(\phi(A)\phi(B)) \ \ \ \ \ (A\in B(X), B \in F_1(X)),
\end{equation}
then $\phi(A)=\kappa(A) A$, where $\kappa:B(X)\longrightarrow
\mathbb{C}$ is a function such that for any non-scalar operator
$A \in B(X)$ we have $\kappa (A)=1$ or $-1$ and for any scalar
operator $ \lambda I$ we have $ \kappa ( \lambda I)= \gamma (
\lambda)I$, where $\gamma:\mathbb{C}\longrightarrow\mathbb{C}$ is
a bijective map.

%---------------------------------------------------------------------------------------%
\section{The statement of the main results}
%---------------------------------------------------------------------------------------%
\noindent First we recall some
notations. $X^ *$
denotes the dual space of $X$. For every nonzero $x\in X$ and
$f\in X^ *$, the symbol $x\otimes f$ stands for the rank-one
linear operator on $X$ defined by $(x\otimes f)y=f(y)x$ for any
$y\in X$. Note that every rank-one operator in $B(X)$ can be
written in this way. The rank-one operator $x\otimes f$ is
idempotent if and only if $f(x)=1$ and is nilpotent if and only if $f(x)=0$. We denote by $F_1(X)$ and $P_1(X)$ the
set of all rank-one operators and the
set of all rank-one idempotent operators in $B(X)$, respectively.

Let $x\otimes f$ be a rank-one operator. It is easy to check that $x\otimes f$ is an idempotent if and only if $Fix(x\otimes f)=\langle x\rangle$ (the linear subspace spanned by $x$). If $x\otimes f$ isn't idempotent, then $Fix(x\otimes f)=\{0\}$.\\
First we prove some auxiliary lemmas. Let $\phi:B(X)\longrightarrow B(X)$ be a surjective map which satisfies condition (\ref{*})
%---------------------------------------------------------------------------------------%
\begin{lemma}\label{prop}
Let $A,B \in B(X)$. Then $A=B$ if and only if
$Fix(AT)=Fix(BT)$, for all $T\in F_{1}(X)$.
\end{lemma}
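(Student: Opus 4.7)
The ``if'' direction is routine, since $A=B$ clearly gives $AT=BT$ and hence $Fix(AT)=Fix(BT)$. All the content is in the converse, and my plan is to fix an arbitrary nonzero $y\in X$ and show $Ay=By$ by probing $A$ and $B$ with rank-one test operators of the form $T=y\otimes g$, $g\in X^{\ast}$. Once this is done for every $y$, we conclude $A=B$.

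The key observation is the identity $A(y\otimes g)=(Ay)\otimes g$ (and likewise for $B$), which follows immediately from $(y\otimes g)z=g(z)y$. Specializing the hypothesis to $T=y\otimes g$ then gives
\[
Fix\bigl((Ay)\otimes g\bigr)=Fix\bigl((By)\otimes g\bigr)\qquad(g\in X^{\ast}).
\]
Combined with the characterization recalled just before the lemma, namely that $Fix(x\otimes f)=\langle x\rangle$ when $f(x)=1$ and $Fix(x\otimes f)=\{0\}$ otherwise (with $Fix(0)=\{0\}$), this reduces the problem to comparing at most one-dimensional subspaces. I would then split into two cases. If $Ay\neq 0$, use Hahn--Banach to choose $g\in X^{\ast}$ with $g(Ay)=1$; then the left-hand side above equals $\langle Ay\rangle$, which is one-dimensional, so the right-hand side must be one-dimensional as well, forcing $By\neq 0$, $g(By)=1$, and $\langle By\rangle=\langle Ay\rangle$. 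Writing $Ay=\lambda By$ and applying $g$ to both sides yields $\lambda=1$, hence $Ay=By$. If $Ay=0$, the left-hand side is $\{0\}$ for every $g$, so $(By)\otimes g$ is never idempotent, i.e.\ $g(By)\neq 1$ for all $g\in X^{\ast}$.

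The only genuinely delicate point is this last case: I must rule out $By\neq 0$, which is done by the Hahn--Banach observation that if $By$ were nonzero, some functional would normalize to $1$ on it, a contradiction. So $By=0=Ay$ as required. Besides this small Hahn--Banach step, the proof is a direct calculation once the hypothesis is specialized to $T=y\otimes g$, and the dimension assumption $\dim X\geq 3$ is not needed here.
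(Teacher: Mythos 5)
Your proof is correct and takes essentially the same route as the paper: both specialize the hypothesis to rank-one test operators $T=y\otimes g$ and exploit the dichotomy that $Fix(x\otimes f)$ is $\langle x\rangle$ when $f(x)=1$ and $\{0\}$ otherwise. If anything, your explicit handling of the case $Ay=0$ is slightly more careful than the paper's argument, which tacitly assumes a functional $f$ with $f(Ax)=1$ exists (i.e.\ that $Ax\neq0$).
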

\begin{proof}
If $A\neq B$, then there is a $x\in X$ such that $Ax\neq Bx$. So
there exists a linear functional $f$ such that $f(Ax)=1$ and
$f(Bx)\neq1$. Setting $T=x\otimes f$, we have $Fix(Ax\otimes
f)=\langle Ax\rangle $ but $Fix(Bx\otimes f)=\{0\}$. This
contradiction yields assertion. The converse side is clear.
\end{proof}
%---------------------------------------------------------------------------------------%
\begin{lemma}\label{a}
$\phi (0)=0$ and $\phi$ is injective.
\end{lemma}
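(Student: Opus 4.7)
The strategy is to dispose of injectivity first by a direct appeal to Lemma~\ref{prop}, and then use the resulting bijectivity of $\phi$ to deduce $\phi(0)=0$ by essentially the same lemma applied to $\phi^{-1}$.

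For injectivity, suppose $\phi(A)=\phi(A')$. Then for every $T\in F_1(X)$, two applications of the hypothesis (\ref{*}) give
$$Fix(AT)=Fix(\phi(A)\phi(T))=Fix(\phi(A')\phi(T))=Fix(A'T),$$
so Lemma~\ref{prop} forces $A=A'$.

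Now with $\phi$ bijective, I would substitute $A=\phi^{-1}(C)$ and $B=\phi^{-1}(D)$ into (\ref{*}); this is legitimate precisely when $\phi^{-1}(D)\in F_1(X)$, i.e., when $D\in\phi(F_1(X))$. The hypothesis becomes
$$Fix(CD)=Fix\!\left(\phi^{-1}(C)\,\phi^{-1}(D)\right)\qquad (C\in B(X),\ D\in\phi(F_1(X))).$$
Setting $C=0$ yields $Fix(\phi^{-1}(0)\,\phi^{-1}(D))=\{0\}$ for every $D\in\phi(F_1(X))$. Because $\phi$ is bijective, as $D$ runs over $\phi(F_1(X))$ the operator $\phi^{-1}(D)$ runs over all of $F_1(X)$. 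Hence $Fix(\phi^{-1}(0)T)=\{0\}=Fix(0\cdot T)$ for every $T\in F_1(X)$, and a second application of Lemma~\ref{prop} (to the pair $\phi^{-1}(0)$ and $0$) gives $\phi^{-1}(0)=0$, equivalently $\phi(0)=0$.

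The delicate point---and the reason one does not simply substitute $A=0$ into (\ref{*}) and invoke Lemma~\ref{prop} at once---is that the naive specialization only controls $Fix(\phi(0)\phi(B))$ as $\phi(B)$ varies over the image $\phi(F_1(X))$, which \emph{a priori} might be a strict subset of $B(X)$ whose intersection with $F_1(X)$ need not be all of $F_1(X)$. Passing to $\phi^{-1}$ is exactly the maneuver that converts the "wrong" quantifier range $\phi(F_1(X))$ into the correct range $F_1(X)$ required by Lemma~\ref{prop}; this is the only nontrivial observation in the argument.
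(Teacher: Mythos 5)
Your proof is correct, and while the injectivity half is identical to the paper's (apply (\ref{*}) twice and invoke Lemma \ref{prop}), your treatment of $\phi(0)=0$ takes a genuinely different route. The paper argues by contradiction: assuming $\phi(0)\neq 0$, it picks $x$ with $\phi(0)x\neq 0$ and $f$ with $f(\phi(0)x)=1$, uses surjectivity to find $A$ with $\phi(A)=x\otimes f$, and then derives $\{0\}=Fix(0\cdot A)=Fix(\phi(0)x\otimes f)=\langle\phi(0)x\rangle$, a contradiction. Note that this applies (\ref{*}) with $A=\phi^{-1}(x\otimes f)$ in the \emph{second} slot, and at this stage of the paper it is not yet known that such an $A$ is rank-one (that is only established later, in Lemma \ref{lemma1}); so the paper's argument implicitly leans on the unrestricted version of the hypothesis stated in the abstract rather than the restricted form $(A\in B(X),\ B\in F_1(X))$ of (\ref{*}). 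Your route --- prove injectivity first, pass to $\phi^{-1}$, and observe that $\phi^{-1}(D)$ sweeps out all of $F_1(X)$ as $D$ ranges over $\phi(F_1(X))$, so that Lemma \ref{prop} applies directly to the pair $\phi^{-1}(0)$ and $0$ --- stays entirely within the stated hypothesis and reuses Lemma \ref{prop} uniformly for both halves of the lemma. The price is that you must establish injectivity before $\phi(0)=0$ (the paper does it in the opposite order), but nothing in the injectivity argument depends on $\phi(0)=0$, so the reordering is harmless; the quantifier-range observation you flag at the end is exactly the point where your argument is more careful than the paper's.
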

\begin{proof}
If $ \phi (0) \neq 0$, then there is a vector $x$ such that $
\phi (0)x \neq 0$, which implies that there is a linear
functional $f$ such that $f( \phi (0)x)=1$. Since $\phi$ is
surjective, there is an operator $A \in B(X)$ such that
$\phi(A)=x \otimes f$. By the preserving property of $ \phi$ we obtain $0=Fix(0.A)=Fix( \phi (0)x \otimes f)$. This is a contradiction, because $Fix( \phi (0)x \otimes f)= \langle \phi (0)x\rangle$. So $ \phi (0)=0$.\\
If $A,B\in B(X)$ such that $\phi(A)=\phi(B)$, then from
(\ref{*}) we obtain $Fix(AT)=Fix(BT)$ for every $T\in F_{1}(X)$ and this by
Lemma (\ref{prop}) yields $A=B$.
\end{proof}
%---------------------------------------------------------------------------------------%
\begin{lemma}\label{lemma1} $\phi$ preserves rank-one operators in both direction.
\end{lemma}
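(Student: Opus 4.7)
The plan is to prove the biconditional $T \in F_1(X) \Leftrightarrow \phi(T) \in F_1(X)$ by treating each direction separately, exploiting the bijectivity of $\phi$ (Lemma~\ref{a}) and the above characterization of fixed-point sets of rank-one operators. For the forward direction, fix $T = x \otimes f \in F_1(X)$ and observe that for every $A \in B(X)$, $AT = Ax \otimes f$ yields $Fix(AT)$ equal to $X$ (when $Ax=0$), to the one-dimensional subspace $\langle Ax\rangle$ (when $f(Ax)=1$), or to $\{0\}$ otherwise. Combining the preserving property with the surjectivity of $\phi$, this forces $Fix(R\phi(T))$ to be $X$, $\{0\}$, or one-dimensional for every $R \in B(X)$. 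Assuming toward a contradiction that $\mathrm{rank}\,\phi(T) \ge 2$, one picks $u_1, u_2$ with $y_i := \phi(T)u_i$ linearly independent; the $u_i$ are then also independent. Via Hahn--Banach, choose $g_1, g_2 \in X^*$ biorthogonal to $y_1, y_2$ and set $R := u_1 \otimes g_1 + u_2 \otimes g_2$, so that $R\phi(T)u_i = u_i$ and $\langle u_1, u_2\rangle \subseteq Fix(R\phi(T))$. Since $R\phi(T)$ has rank at most $2 < \dim X$, its fixed set cannot equal $X$, contradicting the trichotomy. Hence $\mathrm{rank}\,\phi(T) \le 1$, and positivity of the rank follows from $\phi(0)=0$ together with injectivity.

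For the reverse direction, let $S := \phi(T) = u \otimes g \in F_1(X)$ and take any $B' = v \otimes h \in F_1(X)$. The preserving property gives $Fix(TB') = Fix(S\phi(B'))$. The forward direction guarantees $\phi(B') \in F_1(X)$, so $S\phi(B')$ is a product of two rank-one operators, and a short computation shows that $Fix(S\phi(B'))$ always lies in $\{X, \{0\}, \langle u\rangle\}$. On the other hand, $Fix(TB') = Fix(Tv \otimes h)$; choosing $v$ with $Tv \ne 0$ and (via Hahn--Banach) $h$ with $h(Tv)=1$ makes this set the one-dimensional subspace $\langle Tv\rangle$. Matching these forces $Tv \in \langle u\rangle$ for every such $v$, and hence for every $v \in X$. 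Therefore $\mathrm{range}(T) \subseteq \langle u\rangle$, so $T$ has rank at most one; injectivity again rules out $T=0$.

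The main obstacle is the forward direction: one must produce, from a putative rank $\ge 2$ operator $\phi(T)$, some $R \in B(X)$ whose composition with $\phi(T)$ exhibits a genuinely two-dimensional fixed subspace. The Hahn--Banach construction of $R$ as a finite-rank perturbation is compact, but the verification that $Fix(R\phi(T)) \ne X$ is exactly where the hypothesis $\dim X \ge 3$ enters: it ensures that a rank-$2$ operator cannot coincide with the identity and thereby secretly satisfy the trichotomy through the ``$X$'' option.
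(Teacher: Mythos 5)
Your proof is correct, and it is worth separating the two directions when comparing it with the paper. The forward direction is essentially the paper's argument: assuming $\mathrm{rank}\,\phi(T)\geq 2$, both you and the authors build a left inverse on two independent vectors in the range of $\phi(T)$ (the paper takes an abstract $S$ with $Sy_i=x_i$, you take the explicit finite-rank operator $u_1\otimes g_1+u_2\otimes g_2$), obtain a two-dimensional subspace of fixed points, and contradict the fact that $Fix(A(x\otimes f))\subseteq\langle Ax\rangle$. One slip here: when $Ax=0$ the operator $AT$ is zero and $Fix(0)=\{0\}$, not $X$; your trichotomy therefore carries a spurious case, but since you list $X$ only as a possibility and then exclude it by the rank bound $2<\dim X$, the argument is unaffected. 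The reverse direction is where you genuinely depart from the paper, to your advantage. The paper disposes of it by saying that $\phi^{-1}$ satisfies the same hypothesis and ``a similar way'' applies; but the hypothesis (\ref{*}) only quantifies over second factors $B\in F_1(X)$, and running the forward argument for $\phi^{-1}$ would require the identity $Fix(ST)=Fix(\phi(S)\phi(T))$ for a $T$ of rank at least two --- exactly what the stated hypothesis does not provide. Your direct argument avoids this: you only ever invoke (\ref{*}) with a rank-one second factor $v\otimes h$, use the already-proved forward inclusion to see that $Fix(S\phi(v\otimes h))$ is either trivial or equal to $\langle u\rangle$, and conclude $\mathrm{range}(T)\subseteq\langle u\rangle$. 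This buys a proof of the converse that works under the weaker, literally stated hypothesis, whereas the paper's one-line reduction implicitly assumes the preserving identity for all pairs of operators.
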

\begin{proof}
Let $\phi(x\otimes f)=T$ and $\mathrm{rank} T\geq 2$. So there exist linearly
independent $y_{1}, y_{2}$ in range of $T$. This implies that there exist $x_{1}, x_{2} \in X$ such that
$Tx_{1}=y_{1}$ and $Tx_{2}=y_{2}$. We can find a bounded linear operator $S$
such that $Sy_{1}=x_{1}$ and $Sy_{2}=x_{2}$. Hence
$$STx_{1}=Sy_{1}=x_{1},$$
$$STx_{2}=Sy_{2}=x_{2},$$
these imply
\begin{equation}\label{eq1}
x_{1},x_{2}\in Fix(ST).
\end{equation}
It is easy to see that $x_{1}$ and $x_{2}$ are linearly
independent. On the other hand, since $\phi$ is surjective, there exists an operator $S^{'}\in B(X)$ such
that $\phi(S^{'})=S$. So we have,
$$Fix(ST)=Fix(S^{'}x\otimes f)\subseteq \langle S^{'}x\rangle.$$
This is a contradiction, since $Fix(ST)$ contains at least two linearly
independent elements. Therefore, $\mathrm{rank} T\leq 1$. This together with Lemma (\ref{a}) implies that $\mathrm{rank} T=1$.\\
For the converse, we can write
$Fix(AB)=Fix(\phi^{-1}(A)\phi^{-1}(B))$, because  $\phi$ is
bijective. By a similar way as above, $\phi^{-1}$ preserves rank-one
operators, and this completes the proof.
\end{proof}
%---------------------------------------------------------------------------------------%
\begin{lemma}\label{lemmab} $\phi (I)=I$ or $-I$.
\end{lemma}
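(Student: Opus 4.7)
The plan is to first extract sharp information about the range of $\phi(P)$ for a rank-one idempotent $P$, then feed $A = I$ into condition (\ref{*}) to force $\phi(I)$ to act as $\pm 1$ on every line in $X$, and finally promote this pointwise dichotomy to a global sign by a linearity argument.

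First I would fix an arbitrary rank-one idempotent $P = x \otimes f$ (so $f(x) = 1$) and, using Lemma \ref{lemma1}, write $\phi(P) = y \otimes g$. Applying (\ref{*}) with $A = B = P$ gives $Fix(\phi(P)^{2}) = Fix(P^{2}) = Fix(P) = \langle x \rangle$. A direct computation yields $\phi(P)^{2} = g(y)\, y \otimes g$, and for the fixed-point set of this rank-at-most-one operator to equal the one-dimensional subspace $\langle x \rangle$, one is forced to have $g(y)^{2} = 1$ (in particular $g(y) \neq 0$) and $\langle y \rangle = \langle x \rangle$. Writing $y = c x$ with $c \neq 0$, this says $c g(x) = \varepsilon \in \{+1, -1\}$; in particular, $\phi(P)$ has the same range as $P$.

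Next I would apply (\ref{*}) with $A = I$ and $B = P$ to obtain $\langle x \rangle = Fix(P) = Fix(\phi(I)\phi(P)) = Fix((\phi(I) y) \otimes g)$. Since a rank-one operator $u \otimes g$ has one-dimensional fixed set $\langle x \rangle$ only when $u \neq 0$, $g(u) = 1$ and $\langle u \rangle = \langle x \rangle$, this forces $\phi(I) y = \mu x$ for some $\mu \neq 0$ with $\mu g(x) = 1$. Substituting $y = c x$ yields $\phi(I) x = (\mu/c) x$, and $\mu/c = (\mu g(x))/(c g(x)) = 1/\varepsilon = \varepsilon \in \{+1, -1\}$. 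Hence every nonzero $x \in X$ is an eigenvector of $\phi(I)$ with eigenvalue $\varepsilon_{x} \in \{+1, -1\}$.

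I would close by observing that a bounded operator for which every nonzero vector is an eigenvector must be a scalar: if $\varepsilon_{x_{1}} = +1$ and $\varepsilon_{x_{2}} = -1$ for some linearly independent $x_{1}, x_{2}$, then $\phi(I)(x_{1} + x_{2}) = x_{1} - x_{2}$ is not a scalar multiple of $x_{1} + x_{2}$, contradicting that $x_{1} + x_{2}$ is itself an eigenvector. Thus $\varepsilon_{x}$ is constant on $X \setminus \{0\}$, giving $\phi(I) = I$ or $\phi(I) = -I$. The main obstacle I anticipate is the first step: cleanly extracting both the sign $g(y) = \pm 1$ and the range equality $\langle y \rangle = \langle x \rangle$ from the single identity $Fix(\phi(P)^{2}) = \langle x \rangle$, since both pieces are essential to make the $A = I$ computation transparent.
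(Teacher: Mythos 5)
Your proof is correct, and it rests on the same two ingredients as the paper's: the identity $Fix(P)=Fix(\phi(P)^{2})$ to pin down $\phi(P)=y\otimes g$ with $g(y)=\pm1$ and $\langle y\rangle=\langle x\rangle$, and the identity $Fix(P)=Fix(\phi(I)\phi(P))$ to constrain $\phi(I)$. Where you genuinely diverge is in how the sign ambiguity is resolved. The paper passes from ``$g(y)=\pm1$ for each idempotent'' directly to ``$\phi$ or $-\phi$ preserves the rank-one idempotents in both directions,'' i.e.\ it tacitly assumes the sign is the same for every idempotent $P$, and then runs the $A=I$ computation under that global hypothesis to get $g(Uy)=1$ whenever $g(y)=1$, whence $U=I$. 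You instead keep the sign local --- $\phi(I)x=\varepsilon_{x}x$ with $\varepsilon_{x}\in\{+1,-1\}$ for each line $\langle x\rangle$ --- and only at the end use linearity of $\phi(I)$ (the ``every nonzero vector is an eigenvector'' argument, which works since two independent vectors with opposite signs would make their sum fail to be an eigenvector) to force a single global sign. This buys you something real: your version never needs the unproved uniformity of the sign across idempotents, so it closes the small gap the paper leaves at the phrase ``which implies that $\phi$ or $-\phi$ preserves the rank-one idempotents in both directions.'' Both routes are of comparable length and both rely on Lemma \ref{lemma1}; yours is the more self-contained of the two.
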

\begin{proof}
Let $x\otimes f$ be a rank-one idempotent operator. By lemma (\ref{lemma1}) there exist $y \in X$ and $g\in X^*$ such that $\phi(x\otimes f)=y\otimes g$. We have,
$$Fix(x\otimes f)=Fix((x\otimes f)^{2})=Fix((y\otimes
g)^{2})=Fix(g(y)y\otimes g),$$
 we obtain $g(y)=1$ or $-1$, which implies that $\phi$ or $-\phi$ preserves the rank-one idempotents in both directions.\\
 Let $\phi (I)=U$. If $\phi$ preserves the rank-one idempotents in both directions and $y \in X$ and $g \in X^*$ such that $g(y)=1$, then from
 $$Fix(x\otimes f)=Fix(Uy\otimes g),$$
 we obtain $g(Uy)=1$. Therefore, for any $y \in X$ and $g \in X^*$ such that $g(y)=1$, we have $g(Uy)=1$. Now it is easy to check that $U=I$. If $-\phi$ preserves the rank-one idempotents in both directions, with a similar discussion we obtain $U=-I$. The proof is complete.
\end{proof}
%---------------------------------------------------------------------------------------%
Without loss of generality, next we assume that $\phi (I)=I$.
%---------------------------------------------------------------------------------------%
\begin{lemma}\label{lemma2}
$\phi(P)=P$, for all $P \in P_1(X)$.
\end{lemma}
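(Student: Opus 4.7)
The plan is to pin down $\phi(P)$ in two stages: first recognize it as a rank-one idempotent of the form $x\otimes h$ with $h(x)=1$, then force $h=f$ by probing with a cleverly chosen family of rank-one idempotents. Write $P=x\otimes f$ with $f(x)=1$. Applying (\ref{*}) with $A=I$, $B=P$ and using the standing assumption $\phi(I)=I$ gives $Fix(P)=Fix(\phi(P))=\langle x\rangle$. By Lemma~\ref{lemma1} the image $\phi(P)$ is rank-one, and the argument in the proof of Lemma~\ref{lemmab} already shows that $\phi$ sends rank-one idempotents to rank-one idempotents, so $\phi(P)=x\otimes h$ for some $h\in X^{*}$ with $h(x)=1$. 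The same reasoning gives $\phi(Q)=y\otimes g_{Q}$ with $g_{Q}(y)=1$ for every $Q=y\otimes g\in P_1(X)$. Since $h$ and $f$ already agree at $x$, it suffices to prove $\ker f\subseteq\ker h$.

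To that end, I fix $y\in\ker f\setminus\{0\}$ (note $y\notin\langle x\rangle$ since $f(y)\neq f(x)$) and, for an arbitrary $g\in X^{*}$ with $g(y)=1$, form the rank-one idempotent $Q=y\otimes g$. A direct computation gives $PQ=f(y)x\otimes g=0$ and $\phi(P)\phi(Q)=h(y)x\otimes g_{Q}$. The preserving property forces $Fix(h(y)x\otimes g_{Q})=\{0\}$, while the operator $h(y)x\otimes g_{Q}=x\otimes h(y)g_{Q}$ is idempotent with $Fix=\langle x\rangle$ precisely when $h(y)g_{Q}(x)=1$. Thus, assuming $h(y)\neq 0$, no admissible $g_{Q}$ can take the value $1/h(y)$ at $x$.

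The main obstacle is to engineer a $g$ for which $g_{Q}(x)=1/h(y)$, producing the desired contradiction. For this I would invoke a symmetry argument: $\phi^{-1}$ is surjective, fixes $I$, preserves rank-one operators in both directions, and satisfies (\ref{*}), so the analogue of the preceding paragraph applied to $\phi^{-1}$ shows that $\phi^{-1}$ likewise carries $\{y\otimes g':g'(y)=1\}$ into itself. Hence $\phi$ bijects this set, and the induced map $g\mapsto g_{Q}$ is a bijection of $\{g\in X^{*}:g(y)=1\}$. Because $x\notin\langle y\rangle$, Hahn--Banach produces a functional in that set attaining any prescribed value at $x$, and in particular one with $g_{Q}(x)=1/h(y)$. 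The resulting contradiction forces $h(y)=0$ for every $y\in\ker f$, whence $h$ is a scalar multiple of $f$; agreement at $x$ pins the scalar to $1$, giving $h=f$ and $\phi(P)=P$.
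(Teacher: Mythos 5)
Your proof is correct, but its second half takes a genuinely different route from the paper's. Both arguments begin the same way: $\phi(P)$ is rank one by Lemma~\ref{lemma1}, and $Fix(\phi(P))=Fix(P)=\langle x\rangle$ forces $\phi(P)=x\otimes h$ with $h(x)=1$ (the paper reaches this via $Fix((x\otimes f)^2)$). The difference is in proving $h=f$. The paper observes that $\ker f$ and $\ker h$ are both algebraic complements of $\langle x\rangle$ and from this concludes $\ker f=\ker h$; as written that inference is not valid, since distinct hyperplanes can complement the same line. You instead probe $P$ with the pencil of idempotents $Q=y\otimes g$, $y\in\ker f$ fixed and $g(y)=1$: since $PQ=0$, condition (\ref{*}) forces $h(y)\,g_{Q}(x)\neq 1$ for every such $Q$, and your observation that $\phi$ restricts to a bijection of $\{y\otimes g':g'(y)=1\}$ (because $\phi^{-1}$ satisfies the same hypotheses and fixes $I$) lets you realize any prescribed value of $g_{Q}(x)$ via Hahn--Banach, forcing $h(y)=0$ and hence $\ker f\subseteq\ker h$, so $h=f$. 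This costs an extra symmetry argument but actually supplies a rigorous replacement for the paper's unjustified step; the paper's version is shorter but rests on that deduction of $\ker f=\ker h$, which your argument circumvents entirely.
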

\begin{proof}
Let $x\otimes f$ be a rank-one operator. By lemma (\ref{lemma1}) there exist $y \in X$ and $g\in X^*$ such that
$\phi(x\otimes f)=y\otimes g$.  From (\ref{lemmab}) and the preserving property of $\phi$ we obtain,
$$Fix(x\otimes f)=Fix(y\otimes g)$$
which implies $\langle x\rangle=\langle y\rangle$ and so $x$ and
$y$ are linearly dependent. Without loss of generality, we assume
that $\phi(x\otimes f)=x\otimes g$. From
$$Fix(x\otimes f)=Fix((x\otimes f)^{2})=Fix((x\otimes
g)^{2})=Fix(g(x)x\otimes g),$$
 we obtain $g(x)=1$ or $-1$ and so $x \not \in \ker g$. Thus we have
\begin{equation*}
\ker f\cup\langle x\rangle=X,
\end{equation*}
\begin{equation}\label{eqn1}
\ker g\cup\langle x\rangle=X.
\end{equation}
On the other hand, it is clear that,
\begin{equation*}
\ker f\cap\langle x\rangle=\{0\},
\end{equation*}
\begin{equation}\label{eqn2}
\ker g\cap\langle x\rangle=\{0\}.
\end{equation}
From (\ref{eqn1}) and (\ref{eqn2}) we have $\ker f=\ker g$, this
means $g$ is multiple of $f$. Since $x\otimes f$ and $x\otimes g$
or $x\otimes f$ and $-x\otimes g$ are both idempotent, we obtain
$f=g$ and this completes the proof.
\end{proof}
%---------------------------------------------------------------------------------------%
\begin{lemma}\label{lemma4}
Let $A,B\in B(X)$ be non-scalar operators. If $Fix(AR)=Fix(BR)$ for
every $R\in P_{1}(X)$, then $B=\lambda I+(1-\lambda)A$ for some
$\lambda\in\mathbb{C}\setminus\{1\}$.
\end{lemma}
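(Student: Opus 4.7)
The plan is to use rank-one idempotents $R=x\otimes f$ (with $f(x)=1$) as probes to compare $Ax$ and $Bx$ for each $x\in X$, exploiting that $AR=Ax\otimes f$ and that $Fix(Ax\otimes f)$ equals $\langle Ax\rangle$ if $f(Ax)=1$ and $\{0\}$ otherwise (and similarly for $B$).

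First I would treat vectors $x$ for which $x$ and $Ax$ are linearly independent. For such an $x$, a Hahn--Banach argument yields $f\in X^*$ with $f(x)=f(Ax)=1$. Then $R\in P_1(X)$ and $Fix(AR)=\langle Ax\rangle\neq\{0\}$, so the hypothesis forces $Fix(BR)=\langle Ax\rangle$; since $BR=Bx\otimes f$, this requires $Bx$ to be a nonzero scalar multiple of $Ax$ satisfying $f(Bx)=1$, which pins down $Bx=Ax$.

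The harder case is eigenvectors of $A$. If $Ax=x$, then any $f$ with $f(x)=1$ also satisfies $f(Ax)=1$, and the same computation yields $Bx=x=Ax$. Suppose instead $Ax=\mu x$ with $\mu\neq 1$, so that $Fix(AR)=\{0\}$ for every $f$ with $f(x)=1$. I would first show $Bx\in\langle x\rangle$: otherwise Hahn--Banach would provide $f$ with $f(x)=f(Bx)=1$, yielding $Fix(BR)=\langle Bx\rangle\neq\{0\}$, a contradiction. Write $Bx=cx$. To identify $c$, I would exploit the non-scalarity of $A$ to pick $x_0$ with $x_0$ and $Ax_0$ linearly independent; the previous paragraph already gives $Bx_0=Ax_0$. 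Forming $z=x+tx_0$ for a parameter $t\in\mathbb{C}$, a short linear-algebraic check shows that, for all but at most one value of $t$, the vectors $z$ and $Az=\mu x+tAx_0$ are linearly independent, so the first step applies to $z$ and yields $Bz=Az$. Expanding $Bz=cx+tAx_0$ and $Az=\mu x+tAx_0$ then forces $c=\mu$, whence $Bx=Ax$.

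Together these two cases give $Bx=Ax$ for every $x\in X$, i.e.\ $B=A$, which is the asserted form with $\lambda=0$. The main technical subtlety is the eigenvector case: one must be able to perturb $x$ by a multiple of $x_0$ so that the perturbed vector leaves all eigenspaces of $A$ and the first step applies. This splits according to whether $Ax_0\in\mathrm{span}\{x,x_0\}$ (in which case exactly one value of $t$ is bad) or not (in which case every nonzero $t$ works), but in both subcases a suitable $t$ exists because $\mathbb{C}$ is infinite.
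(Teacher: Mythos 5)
Your proof is correct, but it follows a genuinely different route from the paper's. The paper's proof is a two-line reduction: from $Fix(AR)=Fix(BR)$ it deduces that $AR$ is a nonzero rank-one idempotent exactly when $BR$ is, and then invokes Proposition 2.3 of Fang, Ji and Pang \cite{5}, which supplies the conclusion $B=\lambda I+(1-\lambda)A$. You instead argue from scratch, probing each vector $x$ with idempotents $x\otimes f$ and handling the eigenvectors of $A$ by perturbing along a vector $x_0$ with $x_0,Ax_0$ independent; all the steps check out. Your argument buys two things: it is self-contained (no appeal to \cite{5}), and it actually proves more, namely $B=A$, i.e.\ that only $\lambda=0$ can occur --- indeed, for $x,Ax$ independent and $f(x)=f(Ax)=1$ one has $f(\lambda x+(1-\lambda)Ax)=1$, so the hypothesis forces $\langle \lambda x+(1-\lambda)Ax\rangle=\langle Ax\rangle$, which by independence gives $\lambda=0$. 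This sharper conclusion would render the paper's subsequent step in Case 2 of the Main Theorem (where $\lambda=0$ is derived separately via a rank-one nilpotent) unnecessary. One small quibble: in the sub-case $Ax_0\in\mathrm{span}\{x,x_0\}$ there can be up to two bad values of $t$ (namely $t=0$ and at most one other), not one; this is immaterial since $\mathbb{C}$ is infinite, but the count should be stated carefully.
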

\begin{proof}
Suppose $AR\in P_{1}(X)\setminus\{0\}$. Since $0\neq
Fix(AR)=Fix(BR)$ and $BR$ is rank-one,$BR\in
P_{1}(X)\setminus\{0\}$. By proposition 2.3 in \cite{5}, there is
a $\lambda\neq1$ such that $B=\lambda I+(1-\lambda)A$.
\end{proof}
\noindent In the following lemma we denote
$\mathbb{C}^{*}=\mathbb{C}\setminus\{0,1\}$.
%---------------------------------------------------------------------------------------%

\begin{lemma}\label{lemma5}
Let $A \in B(X)$. Then $A\in \mathbb{C}^{*}I$
if and only if $Fix(AR)=\{0\}$, for every $R \in P_1(X)$.
\end{lemma}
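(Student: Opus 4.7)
I plan to prove the two implications separately. For the ``if'' direction, suppose $A = \lambda I$ with $\lambda \in \mathbb{C}\setminus\{0,1\}$, and fix an arbitrary $R = x\otimes f \in P_1(X)$, so $f(x)=1$. Then $AR = \lambda x\otimes f$ is rank-one, and a fixed point $y$ must satisfy $\lambda f(y) x = y$. Thus $y$ is a scalar multiple of $x$, say $y = cx$; substituting back yields $c\lambda = c$, and since $\lambda \ne 1$ we conclude $c=0$, so $Fix(AR)=\{0\}$.

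For the ``only if'' direction I argue the contrapositive: if $A$ is not a scalar multiple of $I$, I construct an $R\in P_1(X)$ with $Fix(AR)\ne\{0\}$. Since $A\notin \mathbb{C}I$, there exists a vector $x$ such that $\{x, Ax\}$ is linearly independent; in particular both are nonzero. By the Hahn--Banach theorem (first defining $f$ on the two-dimensional span of $x$ and $Ax$ by $f(x)=f(Ax)=1$, then extending) I can pick $f\in X^*$ with $f(x)=f(Ax)=1$. Then $R=x\otimes f\in P_1(X)$, and the rank-one operator $AR = Ax\otimes f$ satisfies $(AR)(Ax) = f(Ax)\, Ax = Ax\ne 0$, so $Ax$ is a nonzero fixed point of $AR$. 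This forces $A$ to be scalar, $A=\lambda I$; the value $\lambda=1$ is immediately excluded since $Fix(R)=\langle x\rangle\ne\{0\}$ for any $R\in P_1(X)$.

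The main technical step is the careful choice of the rank-one idempotent witness: the linear independence of $\{x, Ax\}$ is exactly what allows Hahn--Banach to produce an $f$ that is simultaneously $1$ on both vectors, which in turn makes $Ax$ a nonzero fixed point of $AR$. Once the right witness is in hand the rest is a short algebraic computation, and the verification that $Ax\ne 0$ follows automatically from the linear independence assumption. No interaction with the map $\phi$ or with the earlier lemmas is needed here; this lemma is a purely structural statement about products of $A$ with rank-one idempotents.
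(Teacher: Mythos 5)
Your proof is correct where the paper's is and takes essentially the same route: the witness you build via Hahn--Banach, $f(x)=f(Ax)=1$ for $x$ with $x,Ax$ linearly independent, is exactly the functional the paper specifies by $f(x)=1$, $f(Ax-x)=0$, and the resulting $R=x\otimes f$ gives $AR=Ax\otimes f$ with nonzero fixed point $Ax$ in both treatments; you merely spell out the ``if'' direction and the exclusion of $\lambda=1$, which the paper leaves as clear. One point worth flagging, though it is a defect you inherit from the statement rather than introduce: after concluding that $A$ is scalar, $A=\lambda I$, you exclude only $\lambda=1$, but membership in $\mathbb{C}^{*}I=(\mathbb{C}\setminus\{0,1\})I$ also requires $\lambda\neq 0$, and that case cannot be excluded --- $Fix(0\cdot R)=Fix(0)=\{0\}$ for every $R\in P_{1}(X)$, so $A=0$ satisfies the right-hand condition while lying outside $\mathbb{C}^{*}I$. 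The lemma as stated is therefore false at $A=0$; the paper's own proof has the same gap, since ``$A\notin\mathbb{C}^{*}I$'' does not yield a vector $x$ with $x$ and $Ax$ independent when $A=0$ or $A=I$ (you do treat $A=I$ explicitly, which is slightly more careful than the paper). In the application the zero operator is handled separately via Lemma \ref{a}, but a self-contained proof should either restate the conclusion as $A\in(\mathbb{C}\setminus\{1\})I$ or dispose of $A=0$ explicitly.
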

\begin{proof}
If $A\notin \mathbb{C}^{*}I$, then there exists a vector $x \in X$ such
that $x$ and $Ax$ are linearly independent. This implies that $x$ and $Ax-x$ are linearly independent, too.
So there is a linear functional $f$ such that $f(x)=1$ and $f(Ax-x)=0$. Setting $R=x\otimes f$, we obtain $Fix(AR)=\langle x\rangle$, a contradiction. Hence $A\in \mathbb{C}^{*}I$. The other side is clear.
\end{proof}
%---------------------------------------------------------------------------------------%
\textbf{Proof of Main Theorem.} We divide the proof of main theorem into two cases.\\
 \text{Case1}: $\phi(\lambda I)=\gamma(\lambda)I$, where
$\gamma:\mathbb{C}\longrightarrow\mathbb{C}$ is a bijective map.\\
If $ \lambda \in\mathbb{C}^{*}$, from Lemmas (\ref{lemma2}) and (\ref{lemma5}) we obtain
\begin{eqnarray*}
A\in\mathbb{C}^{*}I&\Leftrightarrow&\forall R\in P_{1}(X):
Fix(AR)=\{0\},\\
&\Leftrightarrow& \forall R\in P_{1}(X): Fix(\phi(A)R)=\{0\},\\
&\Leftrightarrow& \phi(A)\in \mathbb{C}^{*}I.
\end{eqnarray*}
By Lemmas (\ref{a}) and (\ref{lemmab}), $ \phi (0)=0$ and $ \phi (I)= \pm I$ and this completes the proof.\\
\text{Case2}: $\phi(A)=A$ or $-A$, for every non-scalar operator $A \in B(X)$.\\
Suppose $N=x\otimes f$ is a rank-one nilpotent operator. By Lemma
\ref{lemma4}, there exists a $\lambda\in\mathbb{C}\setminus\{1\}$
such that $\phi(N)=\lambda I+(1-\lambda)N$. By Lemma
\ref{lemma1}, since $\phi(N)$ is a rank-one operator, we obtain
$\lambda=0$ and so
$\phi(N)=N$. \\
Now let $A$ is an arbitrary non-scalar operator. By Lemma
\ref{lemma4}, there exists a $\lambda\in\mathbb{C}\setminus\{1\}$
such that $\phi(A)=\lambda I+(1-\lambda)A$. On the other hand,
there exists a vector $x$ such that $x$ and $Ax$ are linearly
independent. So there is a linear functional  $f$ such that
$f(x)=0$ and $f(Ax)=1$. We have
$$Fix(AN)=Fix((\lambda I+(1-\lambda)A)N)=Fix(\lambda N+(1-\lambda)AN).$$
Since $Ax \in Fix(AN)=Fix(Ax\otimes f)$, $Ax \in Fix(\lambda N+(1-\lambda)AN)=Fix(\lambda x\otimes f+(1-\lambda)Ax\otimes f)$ and so
$$(\lambda x\otimes f+(1-\lambda)Ax\otimes f)Ax=Ax$$
which imply $\lambda x+(1-\lambda)Ax=Ax$ and so $\lambda=0$, because $x$ and
$Ax$ are linearly independent. Therefore, $\phi(A)=A$.
%---------------------------------------------------------------------------------------%
\par \vspace{.4cm}{\bf Acknowledgements:} This research is partially
supported by the Research Center in Algebraic Hyperstructures and
Fuzzy Mathematics, University of Mazandaran, Babolsar, Iran.

%---------------------------------------------------------------------------------------%
\bibliographystyle{amsplain}

\end{document}